\newcommand{\N}{\mathbb{N}}
\newcommand{\Z}{\mathbb{Z}}
\newcommand{\Q}{\mathbb{Q}}
\newcommand{\F}{\mathbb{F}}
\newcommand{\Fp}{\mathbb{F}_p}
\newcommand{\Zp}{\mathbb{Z}_p}
\newcommand{\Qp}{\mathbb{Q}_p}
\newcommand{\g}{\mathfrak{g}}
\newcommand{\kk}{\mathfrak{k}}
\newcommand{\q}{\mathfrak{q}}
\newcommand{\n}{\mathfrak{n}}
\newcommand{\U}{\mathcal{U}}
\DeclareMathOperator{\dd}{d}
\DeclareMathOperator{\id}{id}
\DeclareMathOperator{\Hom}{Hom}
\DeclareMathOperator{\HH}{H}
\DeclareMathOperator{\Ext}{Ext}
\newcommand{\isom}{\cong}
\newcommand{\normaleq}{\unlhd}
\DeclareMathOperator{\iso}{iso}
\newcommand{\leqc}{\leq_c}
\newcommand{\dirlim}{\varinjlim}
\newcommand{\invlim}{\varprojlim}
\DeclarePairedDelimiter{\abs}{\lvert}{\rvert}
\DeclarePairedDelimiter{\gen}{\langle}{\rangle}
\DeclarePairedDelimiter{\dbrack}{[\mkern-1.5mu[}{]\mkern-1.5mu]}
\DeclarePairedDelimiter{\dangle}{\langle\mkern-2.75mu\langle}{\rangle\mkern-2.75mu\rangle}
\theoremstyle{plain}
\newtheorem{lemma}{Lemma}[section]
\newtheorem{proposition}[lemma]{Proposition}
\newtheorem{theorem}[lemma]{Theorem}
\newtheorem{corollary}[lemma]{Corollary}
\theoremstyle{definition}
\newtheorem{definition}[lemma]{Definition}
\newtheorem{remark}[lemma]{Remark}
\newtheorem{example}[lemma]{Example}
\newtheorem*{thm}{Theorem}
\newtheorem*{Conjecture}{Conjecture}
\newtheorem*{remark*}{Comment}
\title{Cohomology of solvable saturable pro-$p$ groups and Lie algebras}
\author{Oihana Garaialde Oca\~na, Jon Gonz\'alez-S\'anchez, Lander Guerrero-S\'anchez
 \thanks{\noindent{\itshape 2010 Mathematics Subject Classification.}
     17B55, 17B56 20J05, 20J06. 
    \noindent {\itshape Keywords.} 
    Saturable groups and Lie algebras, cohomology of pro-$p$ groups and of Lie algebras.
    \noindent The  authors were partially supported by the Spanish Government grant  PID2020-117281GB-I00 and PCI2024-155096-2, partly
with FEDER funds, and by the Basque Government project IT974-16.}}
\begin{document}

\maketitle
	
\begin{abstract} Let $p$ be an odd prime, and let $n\in \N$ be an integer. We show that the $n^{\text{th}}$ mod-$p$ cohomology of a solvable saturable pro-$p$ group is isomorphic to the $n^{\text{th}}$ mod-$p$ cohomology of its associated $\Z_p$-Lie algebra $\g$ as a $\F_p$-vector space. Addittonally, we obtain that the $n^{\text{th}}$ mod-$p$ cohomology of $\g$ and of $\g/p\g$ are isomorphic as $\F_p$-vector spaces.
\end{abstract}

\section{Introduction}

In the celebrated paper  \cite{lazard65}, Lazard studied `$p$-adic analytic groups', which roughly speaking are topological groups that also have an analytic structure over the $p$-adic numbers $\Qp$. Lazard also gave a more algebraic flavored characterization of the previous notion: a topological group is $p$-adic analytic if and only if it contains an open saturable pro-$p$ subgroup (see Section \ref{subsec: Groups}). Moreover, Lazard shows that there is a (saturable) $\Z_p$-Lie algebra associated to every saturable pro-$p$ group. Even more, he establishes an isomorphism of categories between the category of saturable pro-$p$ groups and the category of saturable $\Z_p$-Lie algebras. 

The analytic essence of such groups has been hidden by their algebraic reinterpretation given by Lubotzky and Mann in \cite{LMpowerful}. Indeed, they introduce the concept of `powerful' groups, and show that a topological group is $p$-adic anaytic if and only if it contains an open powerful pro-$p$ subgroup. Here, for $p$ odd, a pro-$p$ group $G$ is powerful if $[G,G]\leq G^p$ holds. Equivalently, they show that $G$ is a $p$-adic analytic group if and only if $G$ has finite rank  \cite{LMpowerfulII}, and if $p$ is odd and $G$ is  torsion-free, this value coincides with its dimension $\dim G$ as a manifold over $\Q_p$.

Saturable groups are closely related to `uniformly powerful' pro-$p$ groups—often referred to simply as `uniform' groups—which are finitely generated, torsion-free, powerful pro-$p$ groups. In fact, every uniform group is saturable; see \cite{LMpowerfulII}, \cite{FGJ}. However, these two classes of groups do not coincide; see \cite{klopsch05}. Additional related notions appear, for instance, in \cite{gonzalez07}, \cite{gonzalezjaikin}.

Among the results presented in his seminal work, Lazard studies the relation between the cohomology of compact $p$-adic analytic groups and of their associated Lie algebras. We present a weaker version of his result: let $G$ be a compact $p$-adic analytic group,  and let $\g$ be its associated $\Q_p$-Lie algebra. Then, there is an isomorphism of graded-commutative $\Q_p$-algebras
\[
\HH^*(G;\Q_p)\cong \HH^*(\g;\Q_p)^G
\]
between the cohomology of $G$ and the $G$-stable elements of the cohomology of $\g$ with coefficients in $\Q_p$. Additionally, Lazard proves that the cohomology of `\'equi-$p$ valu\'e'  groups and of their associated $\Z_p$-Lie algebras $\g$ with coefficients in $\F_p$ are isomorphic to the exterior algebra on $\g/p\g$. Specifically, if $G$ is an \'equi-$p$-valu\'e group with associated $\Z_p$-Lie algebra $\g$, then there are isomorphisms
\[
\HH^*(G;\F_p)\cong \HH^*(\g;\F_p)\cong\Lambda^*(\g/p\g),
\]
where the right-hand side denotes the exterior algebra on $\g / p\g$. For the $p$ odd case, \'equi-$p$ valu\'e groups correspond to uniform pro-$p$ groups.

In this manuscript, we aim to extend this isomorphism to the broader class of saturable pro-$p$ groups. Accordingly, we formulate the following conjecture:

\begin{Conjecture}
	Let $G$ be a saturable pro-$p$ group with associated $\Zp$-Lie algebra $\g$. Then, there is an isomorphism of $\Fp$-algebras 
	\begin{displaymath}
		\HH^{*}(G;\Fp)\isom \HH^{*}(\g;\Fp)\cong \HH^*(\g/p\g;\F_p).
	\end{displaymath}
\end{Conjecture}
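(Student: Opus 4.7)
The plan divides naturally along the two isomorphisms in the conjecture. The easier half, $\HH^*(\g;\F_p) \cong \HH^*(\g/p\g;\F_p)$, I would handle directly at the cochain level. A saturable $\Z_p$-Lie algebra is free of finite rank over $\Z_p$, so the Chevalley--Eilenberg complex $C^n(\g;\F_p) = \Hom_{\Z_p}(\Lambda^n\g,\F_p)$ canonically identifies with $\Hom_{\F_p}(\Lambda^n(\g/p\g),\F_p)$, and the differential---defined purely in terms of the bracket---descends mod $p$ to the differential for $\g/p\g$. The two cochain complexes coincide on the nose, yielding the isomorphism even as graded algebras.

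For the harder half $\HH^*(G;\F_p) \cong \HH^*(\g;\F_p)$, I would induct on $\dim G$, using the Lazard correspondence to transport subgroup/quotient information between $G$ and $\g$. The base case $\dim G = 1$, where $G \cong \Z_p$ and $\g \cong \Z_p$, is immediate. For the inductive step, choose a saturable normal subgroup $N \trianglelefteq G$ such that $G/N$ is again saturable---something guaranteed in the solvable case by a suitable piece of the derived series---and let $\mathfrak{n} \trianglelefteq \g$ be the corresponding saturable ideal with $\g/\mathfrak{n}$ saturable. Then compare the Lyndon--Hochschild--Serre spectral sequence for groups,
\[
E_2^{p,q} = \HH^p\bigl(G/N;\HH^q(N;\F_p)\bigr) \Rightarrow \HH^{p+q}(G;\F_p),
\]
with the Hochschild--Serre spectral sequence for Lie algebras,
\[
\widetilde{E}_2^{p,q} = \HH^p\bigl(\g/\mathfrak{n};\HH^q(\mathfrak{n};\F_p)\bigr) \Rightarrow \HH^{p+q}(\g;\F_p).
\]
By the inductive hypothesis applied to both $(N,\mathfrak{n})$ and $(G/N,\g/\mathfrak{n})$, the two $E_2$-pages are isomorphic as $\F_p$-vector spaces, provided that the conjugation action of $G/N$ on $\HH^q(N;\F_p)$ matches the adjoint action of $\g/\mathfrak{n}$ on $\HH^q(\mathfrak{n};\F_p)$---a compatibility that should follow from the naturality of the Lazard correspondence applied to inner automorphisms.

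The main obstacle is upgrading an $E_2$-level isomorphism to an honest isomorphism of abutments: agreement on $E_2$ does not by itself force agreement of the higher differentials. To handle this one would construct an explicit morphism of spectral sequences, most plausibly by lifting the Lazard/Mal'cev logarithm to a chain-level comparison between the bar resolution over $\F_p\dbrack{G}$ and the Chevalley--Eilenberg resolution over $U(\g)/p$, both of which are natural modules over a common completed enveloping algebra. In the solvable case the induction is tractable because each step of the derived series amounts to a central (or at worst abelian) extension, where the comparison reduces to matching classes in $\HH^2$; obtaining the full algebra-level conjecture and, more ambitiously, removing the solvability hypothesis, would require controlling higher differentials through a finer filtration---for instance one coming from a powerful or uniform normal subgroup of $G$---rather than a single quotient, which is where I expect the genuinely new difficulty to lie.
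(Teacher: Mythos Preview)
First, a framing issue: the statement you were given is the paper's \emph{Conjecture}, not a theorem. The paper does not prove it. What the paper does prove is the special case where $G$ is solvable, and even then only as an isomorphism of $\F_p$-vector spaces in each degree, not of graded algebras. So your proposal should be read against that partial result.

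Your treatment of the second isomorphism $\HH^*(\g;\F_p)\cong\HH^*(\g/p\g;\F_p)$ is fine and matches the paper's argument (the paper phrases it via an Eckmann--Shapiro style statement applied to the Chevalley--Eilenberg resolution, but your direct cochain identification is equivalent).

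For the first isomorphism, your skeleton (induct on $\dim G$, compare LHS/Hochschild--Serre spectral sequences through the Lazard correspondence) is exactly the paper's, but you miss the one idea that makes the induction close in the solvable case. The paper does \emph{not} take an arbitrary saturable normal subgroup with saturable quotient; it arranges, via the solvability of $G$, a normal subgroup $K$ with $G/K\cong\Z_p$ (Proposition~4.2). Because $\Z_p$ and its Lie algebra have cohomological dimension~$1$, both spectral sequences are concentrated in the columns $r=0,1$, so \emph{all differentials $d_r$ for $r\ge 2$ vanish automatically} and $E_2=E_\infty$ on both sides. This is precisely what dissolves the obstacle you flag (``upgrading an $E_2$-level isomorphism to an honest isomorphism of abutments''): no chain-level comparison map or control of higher differentials is needed. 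Your suggestion to use a piece of the derived series, which would give an abelian but typically higher-rank quotient, reintroduces the problem, since then $E_2^{r,s}$ is nonzero for many $r$ and the $d_2,d_3,\dots$ must be matched. The remaining work in the paper is to check that the $\Z_p$-action on $\HH^s(K;\F_p)$ and the induced $\mathfrak{q}=\g/\kk$-action on $\HH^s(\kk;\F_p)$ are intertwined by the truncated logarithm (Lemma~4.3, using that the conjugation action is unipotent of class at most $p$ because $\gamma_p(G)\le G^p$); you gesture at this (``compatibility \dots should follow from naturality''), but it is where the actual computation lives. Finally, note that even with the degeneration, the paper only concludes a degree-wise vector-space isomorphism; the multiplicative statement in the Conjecture remains open.
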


It is not difficult to see that the Heisenberg group and its associated Heisenberg Lie algebra over $\Z_p$ satisfy the above conjecture. As a first step towards the proof of the above conjecture we focus on the solvable case.

\begin{thm}\label{thm: main}
Let $p$ be an odd prime number, let $G$ be a solvable saturable pro-$p$ group and let $\g$ denote its associated $\Z_p$-Lie algebra. For every integer $n\geq 0$, there is a isomorphism of $\F_p$-vector spaces
\[
\HH^n(G;\F_p)\cong \HH^n(\g;\F_p)\cong \HH^n(\g/p\g;\F_p).
\]
\end{thm}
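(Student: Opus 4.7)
The last isomorphism $\HH^n(\g;\F_p)\cong \HH^n(\g/p\g;\F_p)$ is essentially tautological from the Chevalley--Eilenberg description: any $\Z_p$-linear map $\Lambda^n_{\Z_p}\g\to \F_p$ vanishes on $p\cdot\Lambda^n_{\Z_p}\g$ and therefore factors through $\Lambda^n_{\F_p}(\g/p\g)$. This identifies the two cochain complexes, since the Chevalley--Eilenberg differential on trivial $\F_p$-coefficients depends only on the bracket modulo $p$.

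For the isomorphism $\HH^n(G;\F_p)\cong \HH^n(\g;\F_p)$, the plan is to induct on the dimension $d = \dim G$. The base case $d=1$ is Lazard's theorem for uniform groups. For the inductive step, let $N = G^{(\ell-1)}$ be the last nontrivial term of the derived series of $G$; it is a closed, normal, abelian subgroup and inherits saturability as a closed subgroup of $G$. The quotient $G/N$ is solvable of strictly smaller derived length, and one needs to verify (using the structure theory of saturable pro-$p$ groups recalled in the preliminaries) that it is again saturable. Via the Lazard correspondence the parallel short exact sequence on the Lie algebra side reads $0\to \mathfrak{n}\to \g\to \g/\mathfrak{n}\to 0$, with $\mathfrak{n}=\g^{(\ell-1)}$ abelian.

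I would then compare the two Lyndon--Hochschild--Serre spectral sequences
\begin{align*}
E_2^{p,q}(G) &= \HH^p\bigl(G/N;\HH^q(N;\F_p)\bigr) \Rightarrow \HH^{p+q}(G;\F_p),\\
E_2^{p,q}(\g) &= \HH^p\bigl(\g/\mathfrak{n};\HH^q(\mathfrak{n};\F_p)\bigr) \Rightarrow \HH^{p+q}(\g;\F_p).
\end{align*}
Since $N$ is abelian saturable, the base case gives $\HH^q(N;\F_p) \cong \Lambda^q(N/pN) \cong \Lambda^q(\mathfrak{n}/p\mathfrak{n}) \cong \HH^q(\mathfrak{n};\F_p)$, and the Lazard correspondence identifies the induced $G/N$-action with the $\g/\mathfrak{n}$-action. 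To apply the inductive hypothesis to $G/N$ with these non-trivial coefficients, the statement must be upgraded to: for every finite-dimensional $\F_p[G]$-module $M$ endowed with the matching $\g/p\g$-module structure via Lazard, $\HH^*(G;M)\cong \HH^*(\g;M)$. Applied to $M = \HH^q(\mathfrak{n};\F_p)$ this would yield an isomorphism of $E_2$ pages.

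The principal obstacle is then to promote the $E_2$-isomorphism to an isomorphism of abutments, which requires compatibility of every higher differential $d_r$, $r\geq 2$, of the two spectral sequences. A clean way to handle this is to lift the comparison to the cochain level, in the spirit of Lazard's uniform-case argument, by constructing a functorial quasi-isomorphism between the continuous bar complex of $G$ and the Chevalley--Eilenberg complex of $\g$; this would automatically provide a morphism of filtered complexes and hence a map of spectral sequences which is an isomorphism on $E_2$, forcing the same on $E_\infty$. The most delicate technical points are therefore: (i) verifying that $G/N$ is again saturable so that the induction closes, and (ii) setting up a sufficiently functorial form of the Lazard correspondence on cohomology to transport the comparison through non-trivial coefficient modules, which is strictly stronger than the bare statement of the theorem.
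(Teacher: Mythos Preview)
Your overall architecture---induction plus comparison of LHS spectral sequences---matches the paper, but you have the extension backwards, and that is exactly why you get stuck on both of your ``delicate points''. The paper does \emph{not} quotient by the last term of the derived series. Instead, it shows (Lemma~4.1 and Proposition~4.2) that a solvable saturable pro-$p$ group admits a closed normal subgroup $K\trianglelefteq G$ with $G/K\cong\Z_p$, and that $K$ is again saturable (it equals its own isolator because the quotient is torsion-free) and PF-embedded in $G$. One then inducts on $\dim G$ by applying the hypothesis to $K$, not to a quotient.

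This single change dissolves both obstacles you flag. First, saturability of the quotient is never needed; what is needed is saturability of the \emph{subgroup} $K$, and that is guaranteed by the isolator machinery. Your choice $N=G^{(\ell-1)}$ need not be isolated, so $G/N$ can have torsion and fail to be saturable. Second, since $G/K\cong\Z_p$ has cohomological dimension~$1$, the $E_2$-page of each spectral sequence is concentrated in the columns $r=0,1$; hence every differential $d_r$ with $r\ge 2$ vanishes for lacunary reasons and $E_2=E_\infty$ automatically. No cochain-level comparison or control of higher differentials is required.

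The remaining work is the one place where non-trivial coefficients enter: one must compare $\HH^r(\Z_p;V)$ with $\HH^r(\q;V)$ for $V=\HH^s(K;\F_p)\cong\HH^s(\kk;\F_p)$. This is done directly (Lemma~4.3) via the truncated logarithm $T\mapsto\sum_{k=1}^{p-1}(-1)^{k+1}(\sigma-1)^k/k$, using that saturability forces $\gamma_p(G)\le G^p$ so the $G/K$-action on $V$ is unipotent of class at most~$p$. Your proposed upgrade to arbitrary coefficients for arbitrary solvable saturable $G$ is therefore unnecessary; one only needs it for the abelian group $\Z_p$, where it is elementary.
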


\bigskip

\noindent
{\bf Notation and organization.} Throughout, $p$ denotes an odd prime number, and $\Z_p$ denotes the ring of $p$-adic integers. Unless otherwise stated, cohomology of a group $G$ or of a Lie algebra $\g$ is assumed to be with coefficients in the finite field $\F_p$ of $p$ elements; denoted by $\HH^*(G)$ and by $\HH^*(\g)$, respectively. 

In Section \ref{sec: LazardCorr}, we set a base on saturable pro-$p$ groups and saturable $\Z_p$-Lie algebras, and we present Lazard's correspondence. In Section \ref{sec: cohomology}, we provide basic notions on cohomology of pro-$p$ groups and of $\Z_p$-Lie algebras. Section \ref{sec: solvable}  is devoted to prove the above Theorem, and in the last section we present some examples and propose directions for further work.

\section{Lazard Correspondence: saturable pro-$p$ groups and saturable Lie algebras}\label{sec: LazardCorr}

 As we mentioned before, in his seminal work \cite{lazard65}, Lazard introduced the concept of saturable pro-$p$ groups and showed a correspondence between this class of groups and that of saturable $\Zp$-Lie algebras. The aim of this chapter is to introduce the aforementioned concepts and to briefly explain the correspondence. Throughout, we assume that $p$ is an odd prime number.
	
	\subsection{Groups}\label{subsec: Groups}
	
		 We start by giving the definition of $PF$ and saturable groups, and by illustrating their connection with $p$-adic analytic groups.
	
	\begin{definition}
		Let $G$ be a pro-$p$ group and let $N\leq G$ be a subgroup. We say that $N$ is \emph{PF-embedded} in $G$ if there is a family $\{N_i\}_{i\in \N}$ of closed normal subgroups of $G$ satisfying the following properties:
		\begin{enumerate}
			\item[{\bf i.}] $N_1=N$.
			
			\item[{\bf ii.}] $N_{i+1}\leq N_i$ for all $i\in \N$.
			
			\item[{\bf iii.}] $\bigcap_{i\in \N}N_i=1$.
			
			\item[{\bf iv.}] $[N_i,G]\leq N_{i+1}$ for all $i\in \N$.
			
			\item[{\bf v.}] $[N_i,_{p-1}G]\leq N_{i+1}^p$ for all $i\in \N$. 
		\end{enumerate}
		We say that $G$ is a \emph{PF-group} if it is PF-embedded in itself.
	\end{definition}
	
	Observe that if $G$ is a $PF$-group, then $G^p$ is an open powerful subgroup of $G$ (see \cite[Proposition 3.2, Corollary 3.5]{FGJ}) and thus, we have the following result. 
	\begin{proposition}
		Every PF-group is $p$-adic analytic.
	\end{proposition}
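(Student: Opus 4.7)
The plan is to deduce the proposition directly from the Lubotzky--Mann algebraic characterization of $p$-adic analytic groups recalled in the introduction, combined with the structural fact about $PF$-groups quoted immediately before the statement.

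First I would recall the precise form of the Lubotzky--Mann criterion: a topological group is $p$-adic analytic if and only if it admits an open powerful pro-$p$ subgroup (for $p$ odd, a pro-$p$ group $H$ being powerful means $[H,H]\leq H^p$). Given a $PF$-group $G$, I would then invoke the cited results \cite[Proposition 3.2, Corollary 3.5]{FGJ}, which assert that $G^p$ is an open powerful subgroup of $G$. Since $G$ is itself a pro-$p$ group, any open subgroup—in particular $G^p$—is a pro-$p$ group.

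Putting these together, $G$ contains the open powerful pro-$p$ subgroup $G^p$, so by the Lubotzky--Mann characterization $G$ is $p$-adic analytic. There is essentially no obstacle here beyond quoting the correct references; the content of the statement is entirely packaged in the two inputs (powerfulness of $G^p$ from \cite{FGJ} and the Lubotzky--Mann criterion from \cite{LMpowerful}), and the role of the proposition is to serve as a bridge justifying why $PF$-groups—and hence saturable pro-$p$ groups—fall within Lazard's class of $p$-adic analytic groups.
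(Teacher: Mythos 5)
Your proof is correct and follows exactly the paper's (implicit) argument: the sentence preceding the proposition already notes that $G^p$ is an open powerful subgroup of a $PF$-group $G$ by \cite[Proposition 3.2, Corollary 3.5]{FGJ}, and the conclusion then follows from the Lubotzky--Mann characterization of $p$-adic analytic groups. Nothing is missing.
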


	\begin{definition}
		A finitely generated pro-$p$ group is said to be \emph{saturable} if it is a torsion-free PF-group.
	\end{definition}
	
		\begin{remark}
		This definition is equivalent to the original one given by Lazard in \cite{lazard65} using valuations, see \cite[Theorem 3.4]{gonzalez07}.
	\end{remark}
	
	Beside uniform pro-$p$ groups, examples of saturable pro-$p$ groups are those coming from the class of torsion-free compact $p$-adic analytic pro-$p$ groups of dimension smaller than $p$ (see \cite{gonzalezklopsch09}), and finitely generated torsion-free nilpotent pro-$p$ groups of nilpotency class smaller than $p$. The concept of saturable pro-$p$ group allows us to give another characterisation of $p$-adic analytic groups \cite{lazard65}.

\begin{theorem}
	A pro-$p$ group is $p$-adic analytic if and only if it is finitely generated and virtually saturable. 
\end{theorem}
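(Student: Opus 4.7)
The plan is to handle the two implications separately, leveraging the earlier characterizations of $p$-adic analytic groups (Lubotzky--Mann via powerful/uniform subgroups) and the fact recorded in the excerpt that every uniform pro-$p$ group is saturable.

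For the \emph{if} direction, suppose $G$ is finitely generated and virtually saturable, and pick an open saturable subgroup $H\leq G$. By definition $H$ is a torsion-free PF-group, so by the proposition proved immediately before (every PF-group is $p$-adic analytic) the subgroup $H$ is $p$-adic analytic. Since being $p$-adic analytic is inherited from any open subgroup to the ambient topological group (an open subgroup provides a chart at the identity that extends by translation), $G$ itself is $p$-adic analytic. This step is essentially formal once the previously established implication ``saturable $\Rightarrow$ PF $\Rightarrow$ $p$-adic analytic'' is in hand.

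For the \emph{only if} direction, assume $G$ is a $p$-adic analytic pro-$p$ group. The finite generation is immediate from the Lubotzky--Mann characterization cited in the introduction: $p$-adic analytic pro-$p$ groups have finite rank, hence are topologically finitely generated. For virtual saturability I would invoke the stronger form of the Lubotzky--Mann theorem (\cite{LMpowerful}, \cite{LMpowerfulII}), which upgrades ``open powerful subgroup'' to ``open uniform (uniformly powerful, torsion-free, finitely generated) subgroup''. The standard argument is that a suitable term $G^{p^k}$ of the lower $p$-series of an open powerful subgroup is uniform; I would simply cite this. Denote such an open uniform subgroup by $U\leq G$. By the fact stated in the excerpt, every uniform pro-$p$ group is saturable, so $U$ is an open saturable subgroup of $G$, witnessing that $G$ is virtually saturable.

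Combining both directions yields the claimed equivalence. The only genuinely nontrivial ingredient is the existence of an \emph{open uniform} subgroup inside a $p$-adic analytic pro-$p$ group; the remainder of the argument is a direct assembly of definitions and of implications already recorded in the previous subsection. Consequently, I do not anticipate any genuine obstacle beyond correctly quoting the Lubotzky--Mann result at the appropriate level of strength.
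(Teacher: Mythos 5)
Your argument is correct. Note, however, that the paper does not prove this theorem at all: it is stated as a known characterisation and attributed to Lazard \cite{lazard65}, so there is no ``paper proof'' to compare against line by line. Your assembly is the standard modern route: the \emph{if} direction follows from the proposition recorded just above the theorem (every PF-group is $p$-adic analytic, applied to an open saturable subgroup, which is open because a closed finite-index subgroup of a profinite group is open) together with the fact that analyticity passes from an open subgroup to the ambient group; the \emph{only if} direction follows from the Lubotzky--Mann theorem in its strong form (existence of an open \emph{uniform} subgroup, not merely a powerful one) combined with the implication ``uniform $\Rightarrow$ saturable'' cited in the paper from \cite{LMpowerfulII} and \cite{FGJ}. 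You correctly isolate the existence of an open uniform subgroup as the one nontrivial external input. Two cosmetic remarks: first, the finite generation of $G$ in the \emph{if} direction is automatic once $G$ has an open saturable (hence finitely generated) subgroup, so that hypothesis does no real work there; second, in the \emph{only if} direction one should say explicitly that a $p$-adic analytic \emph{pro-$p$} group is compact, so that the Lubotzky--Mann structure theory for compact $p$-adic analytic groups applies. Neither point affects the validity of the proof.
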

	
	We finish this section by defining an important concept that will be necessary in the proof of our result (see Section \ref{sec: solvable}).
	
	\begin{definition}
		Let $G$ be a pro-$p$ group and $H\leqc G$. The \emph{isolator} of $H$ in $G$ is the closed subgroup
		\begin{displaymath}
			\iso_G(H)=\gen{g\in G\mid g^{p^k}\in H \text{ for some } k\in \N}.
		\end{displaymath}
	\end{definition}
	
	\begin{proposition}[{\cite[Proposition 3.21]{gonzalezklopsch09}}]\label{prop: isolator}
		Let $G$ be a saturable pro-$p$ group and $H\leqc G$. Then, $\iso_G(H)$ is saturable with  $\abs{\iso_G(H):H}<\infty$, and 
		\begin{displaymath}
			\iso_G(H)=\{g\in G\mid g^{p^k}\in H \text{ for some } k\in \N\}.
		\end{displaymath}
	Furthermore, if $H\normaleq G$ then $\iso_G(H)$ is PF-embedded in $G$. 
	\end{proposition}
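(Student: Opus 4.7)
The plan is to transfer the statement to the associated $\Zp$-Lie algebra $\g$ via Lazard's correspondence, where the isolator becomes the transparent $\Zp$-saturation of a submodule. Write $\mathfrak{h}\le\g$ for the closed $\Zp$-Lie subalgebra corresponding to $H$. As sets $G=\g$, and under the Baker--Campbell--Hausdorff dictionary the group power $g^{p^k}$ matches the scalar $p^k\cdot x$ in $\g$, where $x$ is the Lie element corresponding to $g$. Thus the descriptive set on the right-hand side of the statement corresponds to
\[
S:=\{x\in\g\mid p^k x\in\mathfrak{h}\text{ for some }k\in\N\}.
\]

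First I would verify that $S$ is a closed $\Zp$-Lie subalgebra of $\g$: closure under sum and bracket comes from clearing denominators, e.g.\ $p^{j+k}[x,y]=[p^jx,p^ky]\in\mathfrak{h}$, while topological closedness follows from closedness of $\mathfrak{h}$ together with continuity of multiplication by $p^k$. Translating back through BCH, the subgroup of $G$ corresponding to $S$ contains every generator of $\iso_G(H)$ and lies inside $\iso_G(H)$ by the minimality built into the definition, which yields the set-theoretic identity. For saturability, observe that $S=(\Qp\otimes_{\Zp}\mathfrak{h})\cap\g$ is a $\Zp$-pure Lie subalgebra of $\g$, so Lazard's $p$-valuation on $\g$ restricts to a $p$-valuation on $S$, making $S$ itself saturable. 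Finiteness of the index $\abs{\iso_G(H):H}$ then follows because $\mathfrak{h}$ and $S$ share the same $\Qp$-span and hence the same $\Zp$-rank, so $S/\mathfrak{h}$ is a finitely generated torsion $\Zp$-module.

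For the last claim, if $H\normaleq G$ then $\mathfrak{h}$ is a Lie ideal of $\g$, and the same denominator-clearing trick shows that $S$ is also a Lie ideal, giving $\iso_G(H)\normaleq G$. To exhibit a PF-family $\{N_i\}$ embedding $\iso_G(H)$ into $G$, I would take the sublevels
\[
N_i:=\{g\in\iso_G(H)\mid\omega(g)\ge\omega_1+i-1\},
\]
where $\omega$ is Lazard's $p$-valuation on $G$ and $\omega_1$ is chosen so that $N_1=\iso_G(H)$. The nesting $N_{i+1}\le N_i$ and $\bigcap_i N_i=\{1\}$ are immediate, normality of each $N_i$ in $G$ follows from $G$-invariance of $\omega$ combined with $\iso_G(H)\normaleq G$, and the commutator bounds $[N_i,G]\le N_{i+1}$ and $[N_i,_{p-1}G]\le N_{i+1}^p$ follow from the two defining inequalities of a $p$-valuation. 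The main obstacle I anticipate is precisely this last PF-axiom: one must carefully track how iterated group commutators and $p$-th powers interact through higher terms of the BCH formula, which is exactly where the hypothesis that $p$ be odd and the finer saturability structure of $G$ become indispensable.
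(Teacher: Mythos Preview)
The paper does not supply a proof of this proposition at all: it is quoted from \cite[Proposition~3.21]{gonzalezklopsch09} and used as a black box, with no argument given in the present manuscript. There is therefore nothing here to compare your proposal against.

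That said, your outline is the natural one and is in the spirit of the cited reference: pass to $\g$ via Lazard's correspondence, identify the right-hand set with the $\Zp$-saturation $S=(\Qp\otimes_{\Zp}\mathfrak{h})\cap\g$, and read off purity, saturability, and finite index from elementary $\Zp$-module considerations. The one genuine gap is in your construction of the PF-family for the final claim. You set $N_i=\{g\in\iso_G(H)\mid\omega(g)\ge\omega_1+i-1\}$ and assert $[N_i,G]\le N_{i+1}$ from the valuation axioms. But Lazard's inequality only gives $\omega([x,y])\ge\omega(x)+\omega(y)$, and the infimum of $\omega$ over $G$ is merely required to exceed $1/(p-1)$, not $1$; with a non-integral valuation your unit increments need not be achieved, and the inclusion $[N_i,G]\le N_{i+1}$ can fail as written. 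One has to either rescale the valuation, pass to a discrete refinement, or build the $N_i$ directly from a given PF-series for $G$ intersected with $\iso_G(H)$, and check that the iterated-commutator condition $[N_i,{}_{p-1}G]\le N_{i+1}^p$ survives; this is where the actual work in the cited source lies.
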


	\subsection{Lie algebras}
	
	To establish Lazard's correspondence between groups and Lie algebras, it is necessary to introduce the Lie-theoretic analogues of the corresponding group-theoretic concepts.

	\begin{definition}
		Let $\g$ be a finitely generated $\Zp$-Lie algebra and let $\n\subseteq \g$ be a Lie subalgebra. We say that $\n$ is \emph{PF-embedded} in $\g$ if there is a family $\{\n_i\}_{i\in \N}$ of Lie ideals of $\g$ satisfying the following properties:
		\begin{enumerate}
			\item[{\bf i.}] $\n_1=\n$.
			
			\item[{\bf ii.}] $\n_{i+1}\subseteq \n_i$ for all $i\in \N$.
			
			\item[{\bf iii.}] $\bigcap_{i\in \N}\n_i=0$.
			
			\item[{\bf iv.}] $[\n_i,\g]\leq \n_{i+1}$ for all $i\in \N$.
			
			\item[{\bf v.}] $[\n_i,_{p-1}\g]\leq p\n_{i+1}$ for all $i\in \N$. 
		\end{enumerate}
		We say that $\g$ is a \emph{PF-Lie algebra} if it is PF-embedded in itself.
	\end{definition}
	
	\begin{definition}
		Let $\g$ be a finitely generated $\Zp$-Lie algebra. We say that $\g$ is a \emph{saturable $\Zp$-Lie algebra} if it is a PF-Lie algebra which is free as a $\Zp$-module. 
	\end{definition}

	Once again, uniform $\Zp$-Lie algebras, i.e. powerful and free as a $\Zp$-modules, constitute the classical examples of saturable $\Zp$-Lie algebras.

		\subsection{The correspondence}

	Let $G$ be a saturable pro-$p$ group. Then, we can construct its associated saturable $\Zp$-Lie algebra $\g=\log(G)$ with underlying set $G$ as follows. Given $x,y\in G$ and $\lambda\in \Zp$, define 
	\begin{displaymath}
		\lambda x=x^{\lambda}, \qquad x+y =\lim_{n\to \infty} (x^{p^n}y^{p^n})^{p^{-n}}, \qquad [x,y]=\lim_{n\to \infty}[x^{p^n},y^{p^n}]^{p^{-2n}}. 
	\end{displaymath}
	Conversely, given a saturable $\Zp$-Lie algebra $\g$, we can construct a saturable pro-$p$ group $G=\exp(\g)$ with underlying set $\g$ and product given as follows: for $x,y\in\g$, define 
	\begin{displaymath}
		xy=\Phi(x,y),
	\end{displaymath}
	where $\Phi(X,Y)\in\Q\dangle{X,Y}$ is the Baker-Campbell-Hausdorff formula, which can be written as a formal series
	\begin{displaymath}
		\Phi(X,Y)=X+Y + \sum_{k=1}^{\infty}u_k(X,Y) =\log\big(\exp (X)\exp (Y)\big)
	\end{displaymath}
	with $u_k(X,Y)\in \Q\langle X,Y\rangle$ a Lie polynomial of degree $k\geq 1$, see \cite[Section 6.5]{dixondusautoy99} and \cite[Section VII.32]{schneider13}. Here, $\Q\langle X,Y\rangle$ and $\Q\dangle{X,Y}$ denote the $\Q$-algebras of non-commutative polynomials and power series in the indeterminates $X,Y$, respectively.

	This gives rise to a correspondence between the categories of saturable pro-$p$ groups and saturable $\Zp$-Lie algebras via $\log$ and $\exp$ functors. Moreover, this correspondence sends PF-embedded subgroups to PF-embedded subalgebras, and vice-versa, see \cite{gonzalezklopsch09}.

\section{Cohomology of pro-$p$ groups and of $\Z_p$-Lie algebras}\label{sec: cohomology}

\subsection{Groups}

	The aim of this subsection is to set a base on cohomology of (finitely generated) pro-$p$ groups that will be used in the proof of the main result; see \cite[Chapter 6]{ribeszalesskii13} for further information. Let $G$ be a pro-$p$ group. The completed group algebra of $G$ over $\Zp$ is defined by the inverse limit over the open normal subgroups $N$ of $G$:
	$$
	\Zp\dbrack{G}=\invlim_{N\normaleq_o G}\Zp[G/N].
	$$ 
	For every $n\geq 0$, the continuous $n^{\text{th}}$ cohomology group of $G$ with coefficients on a discrete $\Zp\dbrack{G}$-module $V$ is defined by
\begin{displaymath}
	\HH^n(G,V)=\Ext_{\Zp\dbrack{G}}^n(\Zp,V),
\end{displaymath}
and can be computed as the direct limit of the (discrete) $n^{\text{th}}$ cohomology groups: 
\begin{displaymath}
	\HH^n(G,V)=\dirlim_{N\normaleq_o G}\HH^n(G/N,V^N)=\dirlim_{N\normaleq_o G}\Ext_{\Zp[G/N]}^n (\Zp,V^N).
\end{displaymath}

\begin{example}\label{ex: GroupDimOne}
	Let us compute the cohomology of $G=\Zp$ with coefficients on a $\Zp\dbrack{G}$-module $V$. If we write $G=\gen{\sigma}$, then $\Zp\dbrack{G}=\Zp\dbrack{\sigma-1}$. It is easy to compute that 
	\begin{displaymath}
		\HH^r(G,V)\isom \begin{dcases}
			V^{G} & \text{if } r=0, \\
			\frac{V}{(\sigma-1)V} & \text{if } r=1, \\
			0 & \text{if } r\geq 2. \\
		\end{dcases}
	\end{displaymath}
\end{example}

Just as for discrete cohomology, whenever we have an extension of pro-$p$ groups there is an associated Lyndon-Hochschild-Serre spectral sequence.
\begin{theorem}[{\cite[Theorem 7.2.4]{ribeszalesskii13}}]
	Given an extension of pro-$p$ groups of the form
	\[\begin{tikzcd}
		1 \rar & K \rar & G \rar & G/K \rar &  1,
	\end{tikzcd}\]
	 there is a cohomological type spectral sequence $E$ with 
	\begin{displaymath}
		E_2^{r,s}=\HH^r\big(G/K,\HH^s(K,V)\big) \implies \HH^{r+s}(G,V).
	\end{displaymath}
\end{theorem}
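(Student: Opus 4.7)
The plan is to realize this spectral sequence as an instance of Grothendieck's composite-functor spectral sequence, applied to the $K$-invariants and $(G/K)$-invariants functors.

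First, I would fix the ambient categories. Let $\mathcal{D}_G$ denote the category of discrete $\Zp\dbrack{G}$-modules, and similarly $\mathcal{D}_{G/K}$; these are abelian and have enough injectives, since every $V\in \mathcal{D}_G$ embeds into the discrete coinduced module $\mathrm{Maps}_{\mathrm{cts}}(G,V)$, which is injective. Taking continuous $K$-fixed points defines a functor $F=(-)^K\colon \mathcal{D}_G\to \mathcal{D}_{G/K}$: the subspace $V^K$ is stable under the residual $G/K$-action because $K\normaleq G$, and since $V$ is discrete, so is $V^K$. Composing with $(-)^{G/K}\colon \mathcal{D}_{G/K}\to \mathrm{Ab}$ recovers $(-)^G$, and both functors are left exact.

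Next, I would verify Grothendieck's acyclicity hypothesis, namely that $F$ sends injectives of $\mathcal{D}_G$ to $(-)^{G/K}$-acyclic objects. The clean argument is to exhibit an exact left adjoint to $F$: inflation along $G\twoheadrightarrow G/K$ turns any object of $\mathcal{D}_{G/K}$ into an object of $\mathcal{D}_G$, it is visibly exact, and it satisfies the evident adjunction $\Hom_G(\mathrm{infl}\,M,V)\isom \Hom_{G/K}(M,V^K)$. A right adjoint to an exact functor preserves injectives, so $F$ does. Grothendieck's theorem then produces a spectral sequence $E_2^{r,s}=R^r(-)^{G/K}\!\circ R^sF(V)\Rightarrow R^{r+s}(-)^G(V)$; convergence is automatic since the indexing is first-quadrant.

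Finally, I would identify the derived functors with the relevant continuous cohomology groups. Using the $\Ext$-description recalled in Section \ref{sec: cohomology}, one gets $R^sF(V)=\HH^s(K,V)$, $R^r(-)^{G/K}(-)=\HH^r(G/K,-)$ and $R^{r+s}(-)^G(V)=\HH^{r+s}(G,V)$, matching the $E_2$-page and the abutment claimed in the statement.

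The step I expect to cost the most care is verifying that $\HH^s(K,V)$ is genuinely an object of $\mathcal{D}_{G/K}$, i.e.\ that the conjugation $G/K$-action on this derived functor is well-defined and continuous. Well-definedness follows by transport of structure along any injective resolution of $V$ in $\mathcal{D}_G$, because each term of the resolution remains a $G/K$-module after applying $(-)^K$. Continuity reduces to compatibility with the direct-limit description $\HH^s(K,V)=\dirlim_U \HH^s(K/U,V^U)$ ranging over open subgroups $U\normaleq K$ that are normal in $G$, which is the one piece of topological bookkeeping that has to be done by hand before the categorical machinery delivers the spectral sequence.
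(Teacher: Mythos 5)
This statement is quoted verbatim from Ribes--Zalesskii \cite[Theorem 7.2.4]{ribeszalesskii13} and the paper supplies no proof of its own, so the comparison is really with the cited source; your derivation via the Grothendieck composite-functor spectral sequence for $(-)^G=(-)^{G/K}\circ(-)^K$ is the standard route and is essentially how the result is obtained there. Two details are stated too loosely: $\mathrm{Maps}_{\mathrm{cts}}(G,V)$ is injective in the category of discrete $G$-modules only when $V$ is injective as a $\Zp$-module (or abelian group), so to get enough injectives one should first embed $V$ into such an object and then coinduce; and the identification $R^s\bigl((-)^K\bigr)(V)\isom\HH^s(K,V)$ is not purely formal, since it requires that an injective discrete $G$-module restrict to a $(-)^K$-acyclic discrete $K$-module, which holds because restriction to a closed subgroup preserves injectives (its left adjoint, compact induction, being exact). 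Neither point changes the architecture of your argument, and the topological bookkeeping you flag at the end (the $G/K$-action on $\HH^s(K,V)$ being discrete) is handled correctly by the direct-limit description.
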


\subsection{Lie algebras and Eckmann-Shapiro lemma}

We set a base on cohomology of Lie algebras; see \cite[Chapter 7]{weibel94} for further information. Let $\g$ be a $\Zp$-Lie algebra and consider its universal enveloping algebra $\U\g$. For an integer number $n\geq 0$, the $n^{\text{th}}$ cohomology of $\g$ with coefficients on a $\U\g$-module $V$ is defined by
\begin{displaymath}
	\HH^n(\g;V)=\Ext_{\U\g}^n(\Zp,V).
\end{displaymath}
If $\g$ is a finitely generated free $\Zp$-module (i.e., of finite rank), this cohomology can be computed via the Chevalley-Eilenberg complex: for every $n\geq 0$, define 
\begin{displaymath}
	W_n\g=\U\g\otimes \Lambda_n\g
\end{displaymath}
with differential $\dd_{n+1}\colon W_{n+1}\g\longrightarrow W_{n}\g$ given by
\begin{align*}
	\dd_{n+1}(u\otimes x_1\dotsm x_{n+1})&=\sum_{i=1}^{n+1}(-1)^{i+1}ux_i\otimes x_1\dotsm\hat x_i\dotsm x_{n+1} \\
	& \qquad +
	\sum_{1\leq i<j\leq n+1}(-1)^{i+j} u\otimes [x_i,x_j]x_1\dotsm\hat x_i \dotsm\hat x_j\dotsm x_{n+1}.
\end{align*}
 This complex gives a free $\U\g$-resolution $W_{\bullet}\g\longrightarrow \Zp$ of the trivial module, see \cite[Section 7.7]{weibel94}.

Once again, there is a Lyndon-Hochschild-Serre spectral sequence for Lie algebra extensions.
\begin{theorem}[{\cite[Section 7.5]{weibel94}}]
	Given an extension of $\Zp$-Lie algebras of the form
	\[\begin{tikzcd}
		0 \rar & \kk \rar & \g \rar & \g/\kk \rar &  0,
	\end{tikzcd}\]
	and a $\U\g$-module $V$, there is a cohomological type spectral sequence $E$ with 
	\begin{displaymath}
		E_2^{r,s}=\HH^r\big(\g/\kk;\HH^s(\kk;V)\big) \implies \HH^{r+s}(\g;V).
	\end{displaymath}
\end{theorem}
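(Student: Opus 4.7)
The plan is to prove the two isomorphisms by separate arguments. The second isomorphism $\HH^n(\g;\F_p)\cong\HH^n(\g/p\g;\F_p)$ does not use solvability and is a direct computation with the Chevalley--Eilenberg resolution $W_{\bullet}\g=\U\g\otimes\Lambda_{\bullet}\g$. Applying $\Hom_{\U\g}(-,\F_p)$ and using that $\F_p$ carries the trivial $\U\g$-action, the first summand of the differential $\dd_{n+1}$ vanishes in the Hom-complex because the $ux_i$ act trivially, leaving only the bracket terms. The resulting cochain complex coincides on the nose with the Chevalley--Eilenberg complex of the $\F_p$-Lie algebra $\g/p\g$ with trivial coefficients, and the second isomorphism follows.

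For the first isomorphism $\HH^n(G;\F_p)\cong\HH^n(\g;\F_p)$, I would induct on $d=\dim G=\operatorname{rk}_{\Z_p}\g$. The base case $d\leq 1$ follows from Example~\ref{ex: GroupDimOne} and its Lie-algebra counterpart, each producing $\F_p$ in degrees $0,1$ and vanishing elsewhere. For the inductive step, use solvability to extract a cyclic quotient: by Proposition~\ref{prop: isolator}, the subgroup $N_{0}=\iso_{G}(\overline{[G,G]})$ is saturable and PF-embedded in $G$, with torsion-free (hence saturable) abelian quotient $G/N_{0}\cong\Z_{p}^{k}$. Pulling back a projection onto a $\Z_p$-summand yields a short exact sequence
\[
1\to M\to G\to \Z_{p}\to 1
\]
with $M$ solvable saturable of dimension $d-1$, matched under Lazard by a Lie-algebra extension $0\to\mathfrak{m}\to\g\to\Z_{p}\to 0$.

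Since both the pro-$p$ group $\Z_p$ and the rank-one abelian $\Z_p$-Lie algebra have mod-$p$ cohomological dimension one, both Lyndon--Hochschild--Serre spectral sequences collapse at $E_2$ into short exact sequences
\[
0\to \HH^{1}\bigl(\Z_{p};\HH^{n-1}(M)\bigr)\to\HH^{n}(G)\to\HH^{0}\bigl(\Z_{p};\HH^{n}(M)\bigr)\to 0,
\]
and the analogous sequence for $\g$ and $\mathfrak{m}$. Because $\dim_{\F_p}V^{\Z_{p}}=\dim_{\F_p}V_{\Z_{p}}$ for any finite-dimensional $\F_p[\Z_p]$-module $V$ (apply rank--nullity to $\sigma-1$, and likewise to the Lie-algebra generator $x$), the required equality of total $\F_p$-dimensions reduces to checking $\dim\HH^{j}(M)^{\Z_{p}}=\dim\HH^{j}(\mathfrak{m})^{\g/\mathfrak{m}}$ for all $j\leq n$.

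The main obstacle is making the inductive hypothesis equivariant: a plain vector-space isomorphism $\HH^{j}(M;\F_p)\cong\HH^{j}(\mathfrak{m};\F_p)$ does not suffice, since we must also match the $G/M$-action with the $\g/\mathfrak{m}$-action via Lazard. I would strengthen the inductive claim to allow finite $\F_p$-module coefficients with matched actions; since continuous pro-$p$ actions on finite $\F_p$-modules are unipotent, $\log\sigma=(\sigma-1)-(\sigma-1)^{2}/2+\dotsb$ converges and differs from $\sigma-1$ by a unit, giving equal kernels and cokernels on any such module. Propagating this equivariance through the collapsing LHS spectral sequence at each inductive step should then yield the dimension equality and close the argument.
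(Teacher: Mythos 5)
Your proposal does not address the statement you were asked to prove. The statement is the existence of the Lyndon--Hochschild--Serre spectral sequence for an extension of $\Zp$-Lie algebras $0\to\kk\to\g\to\g/\kk\to 0$, namely $E_2^{r,s}=\HH^r\big(\g/\kk;\HH^s(\kk;V)\big)\Rightarrow\HH^{r+s}(\g;V)$. What you have written instead is an outline of the paper's \emph{main theorem} (the comparison $\HH^n(G;\F_p)\cong\HH^n(\g;\F_p)\cong\HH^n(\g/p\g;\F_p)$ for solvable saturable $G$): you discuss the Chevalley--Eilenberg computation of $\HH^*(\g/p\g)$, induction on $\dim G$, the isolator of the derived subgroup, and the equivariance of the inductive hypothesis. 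None of that establishes the spectral sequence itself; on the contrary, your argument \emph{uses} this spectral sequence (for both the group and the Lie algebra extension) as an input. So as a proof of the quoted theorem the proposal is vacuous, and as a proof of the main theorem it is answering the wrong question here.

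For the record, the statement at hand is a standard result (the paper simply cites \cite[Section~7.5]{weibel94}). A proof goes through the Grothendieck spectral sequence for the composite of invariants functors: since $V^{\g}=(V^{\kk})^{\g/\kk}$, i.e. $\Hom_{\U\g}(\Zp,V)\cong\Hom_{\U(\g/\kk)}\big(\Zp,\Hom_{\U\kk}(\Zp,V)\big)$, one only needs to check that injective $\U\g$-modules are sent by $\HH^0(\kk;-)$ to $\HH^0(\g/\kk;-)$-acyclic modules; alternatively one filters the Chevalley--Eilenberg complex of $\g$ by the number of exterior factors lying in $\kk$, as in the original Hochschild--Serre construction. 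Either route is what a proof of this theorem should contain.
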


The following result is a variant of the Eckmann-Shapiro Lemma with weaker assumptions; see \cite[page 118]{CE1956}.

\begin{lemma}[Eckmann-Shapiro]\label{lem: eckmann-shapiro}
	Let $R\longrightarrow S$ be a ring homomorphism that makes $S$ a $R$-module, $V$ be a left  $S$-module and $W$ be a left $R$-module. Assume that there is a projective $R$-resolution $ P_{\bullet} \longrightarrow W$ of $W$ such that $S\otimes_RP_{\bullet}\longrightarrow S\otimes_R W$ is a projective $S$-resolution of $S\otimes_R W$. Then, for each $n\geq 0$ there is an isomorphism 
	\begin{displaymath}
		\Ext_R^n (W,V_R)\isom \Ext_S^n(S\otimes_R W, V).
	\end{displaymath}
\end{lemma}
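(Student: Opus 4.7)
The plan is to imitate the standard proof of the Eckmann--Shapiro lemma, but exploiting the specific hypothesis that the chosen projective $R$-resolution stays a projective resolution after applying $S\otimes_R -$. The basic tool is the tensor–hom adjunction
\[
\Hom_S(S\otimes_R M, V)\isom \Hom_R(M, V_R),
\]
which holds for any left $R$-module $M$ and any left $S$-module $V$, and is natural in $M$.

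First I would take the projective $R$-resolution $P_\bullet\longrightarrow W$ given by hypothesis. By definition of $\Ext$,
\[
\Ext_R^n(W, V_R)=\HH^n\big(\Hom_R(P_\bullet, V_R)\big).
\]
On the other hand, the hypothesis says $S\otimes_R P_\bullet\longrightarrow S\otimes_R W$ is a projective $S$-resolution of $S\otimes_R W$, so we may use it to compute
\[
\Ext_S^n(S\otimes_R W, V)=\HH^n\big(\Hom_S(S\otimes_R P_\bullet, V)\big).
\]
Applying the adjunction term by term, I get isomorphisms $\Hom_S(S\otimes_R P_i, V)\isom \Hom_R(P_i, V_R)$ for every $i\geq 0$. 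By naturality of the adjunction these isomorphisms commute with the coboundary maps, so they assemble into an isomorphism of cochain complexes
\[
\Hom_S(S\otimes_R P_\bullet, V)\isom \Hom_R(P_\bullet, V_R).
\]
Taking $n$-th cohomology yields the desired isomorphism $\Ext_R^n(W, V_R)\isom \Ext_S^n(S\otimes_R W, V)$.

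There is essentially no obstacle here: the only point that would have required more work in the classical formulation is proving that $S\otimes_R -$ sends projectives to projectives (usually done by assuming $S$ is flat over $R$, or that the projectives in the resolution have a special form so that the result is induced), but this has been built into the hypothesis. The only thing to double-check is the naturality of the adjunction isomorphism with respect to the Chevalley--Eilenberg style differentials that will be plugged in later; this is routine since the adjunction is natural in $M$ and the differentials of $S\otimes_R P_\bullet$ are precisely $\id_S\otimes d$.
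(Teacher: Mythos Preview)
Your argument is correct and is precisely the standard tensor--hom adjunction proof of the Eckmann--Shapiro lemma. Note, however, that the paper does not supply its own proof of this statement: it merely states the lemma and cites Cartan--Eilenberg \cite[page 118]{CE1956}, so there is nothing in the paper to compare against beyond observing that your proof is the classical one found in that reference.
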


The assumption that  there is a projective $R$-resolution $ P_{\bullet} \longrightarrow W$ of $W$ such that $S\otimes_RP_{\bullet}\longrightarrow S\otimes_R W$ is a projective $S$-resolution of $S\otimes_R W$ of the previous statement holds in the particular case where $S$ is a flat $R$-module.

\begin{remark}
	If $\g$ is a $\Zp$-Lie algebra, then $\g/p\g$ is a $\Fp$-Lie algebra. Given a $(\g/p\g)$-module $V$, using Lemma \ref{lem: eckmann-shapiro} with the Chevalley-Eilenberg complex, we can show that 
	\begin{displaymath}
		\HH^{*}(\g;V)=\Ext_{\U\g}^{*}(\Zp,V)\isom\Ext_{\U(\g/p\g)}^{*}(\Fp,V)=\HH^{*}(\g/p\g;V).
	\end{displaymath}
\end{remark}

\begin{example}\label{ex: LieDimOne}
	Let us compute the cohomology of $\g=\Zp$ with coefficients on a $\U\g$-module $V$. If we write $\g=\gen{S}$, then $\U\g=\Zp[S]$. It is easy to compute that 
	\begin{displaymath}
		\HH^r(\g;V)\isom \begin{dcases}
			V^{\g} & \text{if } r=0, \\
			\frac{V}{SV} & \text{if } r=1, \\
			0 & \text{if } r\geq 2. \\
		\end{dcases}
	\end{displaymath}
\end{example}

	\section{The solvable case}\label{sec: solvable}

	Our main goal is to compare the mod-$p$ cohomology of $G$ with that of $\g$, and in doing so, show that they are isomorphic. This seems a challenging problem, and to start with, we focus on the case when $G$ is solvable. We need the following two results.

	\begin{lemma}\label{lem: abelianisation infinite}
		If $G$ is a solvable saturable pro-$p$ group, then $G/G'$ is infinite.
	\end{lemma}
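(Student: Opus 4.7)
The plan is to pass to the associated $\Zp$-Lie algebra $\g$ of $G$ via Lazard's correspondence and argue with dimensions. Assume $G \neq 1$ (otherwise the statement has no content), so $\g$ is a non-zero finitely generated free $\Zp$-module. Under Lazard's correspondence $G$ is a compact $p$-adic analytic group whose tangent algebra is $\g_{\Qp} := \g \otimes_{\Zp}\Qp$, and solvability of $G$ transfers to solvability of the non-zero finite-dimensional $\Qp$-Lie algebra $\g_{\Qp}$ by standard $p$-adic Lie theory.

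Next, I would invoke the standard fact that for a non-zero finite-dimensional solvable Lie algebra over a field, the derived subalgebra is a \emph{proper} subspace: otherwise the derived series stabilizes at the whole algebra and never reaches $0$, contradicting solvability of a non-zero algebra. Applying this to $\g_{\Qp}$ yields $\dim_{\Qp}(\g_{\Qp}/[\g_{\Qp},\g_{\Qp}]) \geq 1$, which is equivalent to saying that $[\g,\g]$ has strictly smaller $\Zp$-rank than $\g$.

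The final step transfers this back to the group side. Let $N := \iso_G(G')$; by Proposition \ref{prop: isolator}, $N$ is saturable and PF-embedded in $G$ with $[N:G']$ finite, so $G/N$ is a finitely generated torsion-free abelian pro-$p$ group, necessarily of the form $\Zp^r$ for some $r \geq 0$. Under Lazard's correspondence $N$ corresponds to the saturation of $[\g,\g]$ in $\g$, which has the same $\Zp$-rank as $[\g,\g]$, so $r$ equals the (positive) $\Zp$-rank of $\g/[\g,\g]$. Hence $G/N \isom \Zp^r$ is infinite, and since $G/G'$ surjects onto $G/N$, the abelianization is infinite as well. The main subtlety is matching $N$ on the group side with the saturation of $[\g,\g]$ on the Lie algebra side — this rests on the preservation of PF-embedded substructures by the $\log$ and $\exp$ functors discussed at the end of Section \ref{sec: LazardCorr}, together with the fact that topological commutators in $G$ and Lie brackets in $\g$ agree up to higher-order Baker--Campbell--Hausdorff terms.
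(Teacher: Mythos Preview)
Your argument is correct in outline but follows a genuinely different route from the paper's. The paper argues purely on the group side by induction on the derived length: if $G/G'$ were finite one would have $G^{p^k}\leq G'$ for some $k$, and then the saturable-group identity $[G,G]^{p^{2k}}=[G^{p^k},G^{p^k}]\leq G''$ forces $G'/G''$ to be finite as well, contradicting the inductive hypothesis. This is short and uses nothing beyond power--commutator calculus in saturable groups. Your approach instead passes through the Lazard correspondence and reduces to the classical fact that a non-zero finite-dimensional solvable Lie algebra over a field has a proper derived subalgebra---very much in the spirit of the paper, but at the cost of the ``matching'' step you yourself flag. One inclusion there is immediate (since $\g/\log N$ is abelian, $[\g,\g]\subseteq\log N$ and hence so is its saturation), but the reverse inclusion genuinely requires comparing group commutators with Lie brackets through the BCH expansion, and nothing stated in the paper delivers this off the shelf. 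You can bypass the difficulty entirely: once you know $\dim_{\Qp}[\g_{\Qp},\g_{\Qp}]<\dim_{\Qp}\g_{\Qp}$, standard $p$-adic Lie theory (the $\Qp$-Lie algebra of the closed derived subgroup is the derived subalgebra) gives $\dim G'<\dim G$ directly, and a closed subgroup of strictly smaller dimension in a compact $p$-adic analytic group has infinite index. That shortcut makes your proof both shorter and free of the matching subtlety, while the paper's argument has the complementary virtue of staying entirely within the elementary group theory of saturable groups.
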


	\begin{proof}
		We will prove the result by induction on the derived length $n$ of $G$. If $n=1$, then $G$ is abelian and $G/G'=G$ is torsion-free, thus infinite.
		
		Assume that the result holds for groups of derived length $n-1$. This implies that $G'/G''$ is infinite. Suppose by contradiction that $G/G'$ is finite. Then, there is some $k\in \N$ such that, for all $g\in G$, $g^{p^k}\in G'$ holds; implying that $G^{p^k}\leq [G,G]$. Therefore, we must have that
		\begin{displaymath}
			[G,G]^{p^{2k}}=[G^{p^k},G^{p^k}]\leq G'',
		\end{displaymath}
		contradicting the fact that $G'/G''$ is infinite.
	\end{proof}

	\begin{proposition}\label{prop: subgroup series}
		Let $G$ be a solvable saturable pro-$p$ group of dimension $\dim G=n$. Then, there is a series of closed subgroups 
		\begin{displaymath}
			G=K_0> K_1 > \dotsb > K_n=1,
		\end{displaymath}
		such that:
		\begin{enumerate}
			\item[{\bf i.}] $K_{i+1}\normaleq K_i$ for all $i=0,\dotsc,n-1$.
			
			\item[{\bf ii.}] $K_i/K_{i+1}\isom \Zp$ for all $i=0,\dotsc,n-1$.
			
			\item[{\bf iii.}] $K_i$ is saturable for all $i=0,\dotsc,n$.
			
			\item[{\bf iv.}] $K_{i+1}$ is PF-embedded in $K_i$ for all $i=0,\dotsc,n-1$.
		\end{enumerate}
	\end{proposition}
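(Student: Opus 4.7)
The plan is to induct on $n=\dim G$. The base case $n=0$ is immediate, since a torsion-free pro-$p$ group of dimension zero must be trivial. For the inductive step it suffices to produce a single closed normal subgroup $K_1\normaleq G$ that is saturable, PF-embedded in $G$, and satisfies $G/K_1\isom \Zp$; applying the inductive hypothesis to $K_1$---which is then solvable, saturable, and of dimension $n-1$---will yield the rest of the series $K_1>K_2>\dotsb>K_n=1$, and conditions (i)--(iv) then follow by concatenation.

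To build $K_1$, I will exploit Lemma~\ref{lem: abelianisation infinite}. The abelianization $G/G'$ is infinite and, as a quotient of the finitely generated pro-$p$ group $G$, is itself a finitely generated $\Zp$-module; hence it decomposes as $\Zp^d\oplus T$ with $T$ finite and $d\geq 1$. Projecting to one of the $\Zp$ summands gives a continuous surjection $\phi\colon G\twoheadrightarrow \Zp$, and I set $K_1:=\ker\phi$. Then $K_1$ is closed and normal in $G$, the quotient $G/K_1$ is isomorphic to $\Zp$, and additivity of dimension in the short exact sequence $1\to K_1\to G\to \Zp\to 1$ gives $\dim K_1=n-1$.

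The remaining task is to upgrade $K_1$ from a mere closed normal subgroup to one that is saturable and PF-embedded in $G$; this is where Proposition~\ref{prop: isolator} does the work. Its isolator $\iso_G(K_1)$ is saturable, PF-embedded in $G$, and contains $K_1$ with finite index. But $\iso_G(K_1)/K_1$ embeds as a finite subgroup of $G/K_1\isom \Zp$, which is torsion-free, forcing $\iso_G(K_1)=K_1$. Hence $K_1$ itself inherits saturability and the PF-embedded property, and induction finishes the proof. The main conceptual point is the observation that $\iso_G(H)=H$ whenever $G/H$ is torsion-free: this is what lets us promote the kernel of the arbitrary surjection $G\twoheadrightarrow\Zp$ supplied by Lemma~\ref{lem: abelianisation infinite} to a saturable, PF-embedded subgroup, and it is the only nontrivial point---everything else is routine once Proposition~\ref{prop: isolator} is in hand.
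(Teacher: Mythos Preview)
Your proof is correct and follows essentially the same route as the paper: both use Lemma~\ref{lem: abelianisation infinite} to produce a surjection $G\twoheadrightarrow\Zp$, set $K_1$ to be its kernel, and then invoke Proposition~\ref{prop: isolator} together with the observation that $\iso_G(K_1)=K_1$ (since $G/K_1\cong\Zp$ is torsion-free) to conclude that $K_1$ is saturable and PF-embedded, before inducting on dimension.
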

	\begin{proof}
		Denote by $T\leq G/G'$ the torsion subgroup of $G/G'$. By Lemma \ref{lem: abelianisation infinite}, $G/G'$ is infinite, and so there is some $s\geq 1$ such that 
		\begin{displaymath}
			G/G'\isom \Zp^s \times T.
		\end{displaymath}
		We can now a take a closed normal subgroup $G'\normaleq K_1\normaleq G$ such that $G/K_1\isom \Zp$. Furthermore, 
		 it is easy to see that $K_1=\iso_G(K_1)$. Indeed, $K_1\leq \iso_G(K_1)$ and, given $g\in \iso_G(K_1)$ with $g^{p^k}\in K_1$, the fact that $G/K_1$ is torsion-free implies that $g\in K_1$. Consequently, $K_1$ is saturable and PF-embedded in $G$ by Proposition \ref{prop: isolator}. Moreover, we have that 
		\begin{displaymath}
			\dim K_1 = \dim G -1.
		\end{displaymath}
	Thus, we can inductively obtain a series
	\begin{displaymath}
		G=K_0> K_1 > \dotsb > K_n=1
	\end{displaymath}
	satisfying the required properties.
	\end{proof}

	Until the end of this section, we proceed to prove the main result of the paper. Let $G$ be a  solvable saturable pro-$p$ group. We would like to show, by induction on the dimension $\dim G$ that, $\HH^{n}(G)\isom \HH^{n}(\g)$ for all $n\geq 0$. The base case, that is, the case where $G\cong \Z_p$ has dimension one holds by Examples \ref{ex: GroupDimOne} and \ref{ex: LieDimOne}. 
	
	Let $n\in \N_{>1}$ be a natural number, and set $n=\dim G$. Assume now that the result follows for all solvable saturable pro-$p$ groups of dimension smaller than $n$. 
	By Proposition \refeq{prop: subgroup series}, there is some PF-embedded (saturable) normal subgroup $K\normaleq G$ such that $G/K\isom \Zp$.  Furthermore, we have that $\dim K= \dim G -1$.  Let $\kk$ denote the $\Z_p$-Lie algebra associated to $K$. Then, by induction  hypothesis, there is an ($G$-equivariant) isomorphism $$\HH^n(K)\isom\HH^n(\kk)$$
	of $\F_p$-vector spaces, for all $n\geq 0$. Now, the strategy is to consider the LHS spectral sequence $E_2$ associated to the group extension
	\[\begin{tikzcd}
		1 \rar & K \rar & G \rar & G/K \rar &  1.
	\end{tikzcd}\]
	Then, we have that 
	\begin{displaymath}
		E_2^{r,s}(G)=\HH^r\big(G/K;\HH^s(K)\big)\isom \HH^r\big(\Z_p;\HH^s(K)\big).
	\end{displaymath}
	Similarly, using Lazard's correspondence, there is an extension of the corresponding $\Z_p$-Lie algebras
		\[\begin{tikzcd}
		0 \rar & \kk \rar & \g \rar & \g/\kk \rar &  0,
	\end{tikzcd}\]
	from which we can compute the second page $E_2^{r,s}(\g)$ of the LHS spectral sequence. The objective is to show that these two spectral sequences, $E_2(G)$ and $E_2(\g)$, are isomorphic as bigraded $\F_p$-vector spaces. To that aim, we need to prove the following result.

		\begin{lemma}\label{lem: abelian} Let $Q\cong \Z_p$ and let $\q\cong \Z_p$ be its associated $\Z_p$-Lie algebra. Let $V$ be an $\F_p$-vector space such that the action of $Q$ on $V$ is unipotent of class at most $p$. Then, for all $n\geq 0$, we have an $Q$-equivariant isomorphism 
		\begin{displaymath}
			\HH^n(Q;V) \isom \HH^n(\q; V)
		\end{displaymath}
		of  $\q$-modules, where the action of $\q$ on $\HH^n(\q; V)$ is given by the $\log$ functor.
	\end{lemma}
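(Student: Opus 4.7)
The plan is to exploit the one-dimensionality of both $Q$ and $\q$ to reduce the claim to the assertion that two specific endomorphisms of $V$ have the same kernel and the same image. Fix a topological generator $\sigma$ of $Q$ and let $S=\log\sigma$ be the corresponding generator of $\q$ provided by Lazard's correspondence. By Examples \ref{ex: GroupDimOne} and \ref{ex: LieDimOne}, both $\HH^n(Q;V)$ and $\HH^n(\q;V)$ vanish for $n\geq 2$, while in degrees $0$ and $1$ they are computed as the kernel and cokernel of multiplication by $\sigma-1$ and by $S$, respectively. So everything reduces to comparing the operators $\sigma-1$ and $S$ on $V$.

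For this, I would set $\tau=\sigma-1\in\End_{\F_p}(V)$. The unipotency assumption says that $\tau^p=0$, and since $p$ is odd the integers $1,2,\dots,p-1$ are invertible in $\F_p$, so the truncated series
\[
L(\tau)\;:=\;\sum_{k=1}^{p-1}\frac{(-1)^{k+1}}{k}\,\tau^{k}
\]
is a well-defined $\F_p$-endomorphism of $V$. Unwinding Lazard's correspondence on the module $V$, the $\q$-action is precisely the one in which $S$ acts as $L(\tau)$. Next, factor
\[
L(\tau)=\tau\cdot u(\tau), \qquad u(\tau)=1-\tfrac{\tau}{2}+\tfrac{\tau^{2}}{3}-\cdots+\tfrac{(-1)^{p}\tau^{p-2}}{p-1}.
\]
Since $u(0)=1$ and $\tau$ is nilpotent, $u(\tau)$ is a unit in the commutative $\F_p$-subalgebra of $\End_{\F_p}(V)$ generated by $\tau$. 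Consequently $\tau$ and $L(\tau)=S$ have identical kernels and identical images on $V$, which yields
\[
\HH^{0}(Q;V)=\ker\tau=\ker S=\HH^{0}(\q;V),\qquad \HH^{1}(Q;V)=V/\tau V=V/SV=\HH^{1}(\q;V).
\]

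For the equivariance statement, I would remark that the natural $Q$-action on $\HH^{*}(Q;V)$ and the natural $\q$-action on $\HH^{*}(\q;V)$ are both trivial (inner automorphisms of an abelian group and inner derivations of an abelian Lie algebra act as the identity on cohomology), so compatibility is automatic once the $\F_p$-vector space isomorphism has been established. The main obstacle I anticipate is the identification of the $\q$-module structure on $V$ coming from Lazard with the one given by $S\mapsto L(\tau)$; this is the conceptual heart of the argument and must be justified using the definition of $\log$ on group elements together with the fact that $\tau^{p}=0$ truncates every formal BCH-type series on $V$ to a genuine polynomial expression over $\F_p$.
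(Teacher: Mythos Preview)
Your argument is correct and, in fact, more elementary than the paper's. Both proofs rest on the same identification --- that under the $\log$ functor the generator $S$ of $\q$ acts on $V$ by the truncated logarithm $\sum_{k=1}^{p-1}\frac{(-1)^{k+1}}{k}(\sigma-1)^k$ --- but they use it differently. You invoke Examples~\ref{ex: GroupDimOne} and~\ref{ex: LieDimOne} to reduce the comparison to showing that $\tau=\sigma-1$ and $L(\tau)$ have the same kernel and image on $V$, and then observe that $L(\tau)=\tau\cdot u(\tau)$ with $u(\tau)$ a unit because $\tau$ is nilpotent; this is a two-line linear-algebra argument tailored to the rank-one situation. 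The paper instead packages the truncated logarithm as a ring map $\Psi\colon\Zp[T]\to\Zp\dbrack{\sigma-1}$, asserts that it makes $\Zp\dbrack{\sigma-1}$ flat over $\Zp[T]$, and then applies the Eckmann--Shapiro Lemma~\ref{lem: eckmann-shapiro} together with the computation $\Zp\dbrack{\sigma-1}\otimes_{\Zp[T]}\Zp\cong\Zp$. Your route avoids both the flatness verification and the tensor-product computation, at the cost of being specific to $\dim Q=1$; the paper's route is more conceptual and would generalise better if one ever needed an analogous statement for higher-rank abelian quotients. Your remark that the $Q$- and $\q$-actions on the respective cohomologies are trivial (so equivariance is automatic) is a clean way to dispatch that part of the claim.
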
 
	
	\begin{proof}
	Let $Q\cong\Zp=\gen{\sigma}$ and $\q\cong\Zp=\gen{T}$ be its associated $\Zp$-Lie algebra, with $\sigma=\exp(T)$ and $T=\log_p(\sigma)$. Then, we have that 
\[
		\Zp\dbrack{Q}=\Zp\dbrack{\sigma-1} \quad \text{and} \quad \U\q =\Zp[T].
\]
	Thus, for each $n\geq 0$, the $n^{\text{th}}$ cohomology of $G$ and of $\g$, with coefficients in a $\Zp\dbrack{Q}$-module $V$ and in a $\U\q$-module $U$ are
$$
	\HH^n(Q;V)=\Ext_{\Zp\dbrack{Q}}^n(\Zp,V) \quad \text{and}\quad 
	\HH^n(\q;U)=\Ext_{\U\q}^n(\Zp,U),
$$
	respectively. Consider the ring homomorphism $\Psi\colon \Zp[T]\longrightarrow \Zp\dbrack{\sigma-1}$ defined by using the truncated logarithm 
	\begin{displaymath}
		\Psi(T)=\log_p \sigma=\sum_{k=1}^{p-1}\frac{(-1)^{k+1}}{k}(\sigma-1)^k.
	\end{displaymath}
	Given a $\Zp\dbrack{\sigma-1}$-module $V$, denote by $\log_p V$ the $\Zp[T]$-module with underlying set $V$ and $\Zp[T]$-action induced by $\Psi$. 
	The homomorphism $\Psi$ turns $\Zp\dbrack{\sigma-1}$ into a flat $\Zp[T]$-module, and so we can apply Lemma \ref{lem: eckmann-shapiro} to obtain an isomorphism
	\begin{displaymath}
		\Ext_{\Zp[T]}^n (\Zp,\log_p V)\isom \Ext_{\Zp\dbrack{\sigma-1}}^n\big(\Zp\dbrack{\sigma-1}\otimes_{\Zp[T]} \Zp, V\big).
	\end{displaymath}
	Furthermore, we can easily show that $$\Zp\dbrack{\sigma-1}\otimes_{\Zp[T]} \Zp\isom
	\Zp,$$
	from which we obtain the result.
	\end{proof}

	We now proceed to state and to prove the main result of this manuscript. Let $G$ be a solvable saturable pro-$p$ group and let $\g$ be its associated $\Z_p$-Lie algebra. Let $K\normaleq G$ be the normal subgroup described in Proposition \ref{prop: subgroup series}. Using Lazard's correspondence, let $\kk=\log K$ be its associated $\Z_p$-Lie algebra, and notice that, $\g/\kk=\log (G/K)\cong \Z_p$. Consider the extensions
	\[
	1\to K\to G\to G/K \to 1, \quad \text{and}\quad 0\to \kk \to \g\to \g/\kk\to 0,
	\]
and their associated spectral sequences 
$$
E^{r,s}_2(G)\cong \HH^r\big(G/K;\HH^s(K,\Fp)\big) \; \text{ and  } \; E^{r,s}_2(\g)\cong  \HH^r\big(\g/\kk;\HH^s(\kk,\Fp)\big),
$$
respectively.

	
	\begin{theorem}  
		Under the hypothesis described above, for all $r,s\geq 0$, we have that $E^{r,s}_2(G)\cong E_2^{r,s}(\g)$, and therefore, $E^{r,s}_{\infty}(G)\cong E^{r,s}_{\infty}(\g)$ as $G/K$-modules.

	\end{theorem}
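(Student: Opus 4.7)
The plan is to identify the two $E_2$-pages term-by-term using the inductive hypothesis together with Lemma~\ref{lem: abelian}, and then to observe that both spectral sequences collapse at $E_2$ for cohomological dimension reasons.

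First, apply the inductive hypothesis to the pair $(K, \kk)$: since $K$ is solvable, saturable, and of dimension $n-1$, there is an isomorphism $\HH^s(K) \isom \HH^s(\kk)$ of $\Fp$-vector spaces for every $s \geq 0$. Crucially, this isomorphism has to be compatible with the actions of $G/K \cong \Zp$ and $\g/\kk \cong \Zp$ on the cohomology: if $\sigma \in G$ projects to a topological generator of $G/K$ and $T = \log_p \sigma$, then the $\sigma$-action on $\HH^s(K)$ must correspond, under the identification, to the action of $T$ on $\HH^s(\kk)$ (equivalently, to the truncated-logarithm action of $\sigma$). This is the $G$-equivariance asserted in the inductive statement, and it is what permits the subsequent transfer from the group side to the Lie algebra side through Lemma~\ref{lem: abelian}.

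Second, for each fixed $s \geq 0$, set $V = \HH^s(K) \isom \HH^s(\kk)$ and apply Lemma~\ref{lem: abelian} with $Q = G/K$ and $\q = \g/\kk$. To do so, one must verify that the $Q$-action on $V$ is unipotent of class at most $p$, i.e.\ that $(\sigma - 1)^p$ annihilates $V$. Pro-$p$ unipotence is automatic since $Q$ acts continuously on the finite-dimensional $\Fp$-space $V$; the class bound should be extracted from the saturable structure of $G$, for instance by descending the Lazard or PF filtration of $G$ to produce a $G$-stable filtration of $V$ of length at most $p$ on whose successive quotients $Q$ acts trivially. Lemma~\ref{lem: abelian} then yields $\HH^r(Q; V) \isom \HH^r(\q; V)$, which is exactly $E_2^{r,s}(G) \isom E_2^{r,s}(\g)$; both sides carry the trivial $G/K$-module structure (since $G/K$ is abelian), so the isomorphism is automatically $G/K$-equivariant.

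For the $E_\infty$-statement, Examples~\ref{ex: GroupDimOne} and~\ref{ex: LieDimOne} imply that $\HH^r(Q;-) = 0 = \HH^r(\q;-)$ for $r \geq 2$, so in each spectral sequence $E_2^{r,s}$ is concentrated in the two columns $r = 0, 1$. Every differential $d_k$ with $k \geq 2$ consequently lands in a zero entry, and both sequences degenerate at $E_2$, giving
\[
E_\infty^{r,s}(G) \isom E_2^{r,s}(G) \isom E_2^{r,s}(\g) \isom E_\infty^{r,s}(\g).
\]
The principal difficulty I anticipate is the verification of the class-at-most-$p$ hypothesis of Lemma~\ref{lem: abelian}: the nilpotence class of the $Q$-action on $\HH^s(K)$ must be controlled using the Lazard or PF filtration and tracked through the induction. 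A related subtlety is making sure that the inductive hypothesis is formulated strongly enough to record the $\log_p$-compatibility of the group- and Lie-algebra-side actions on $\HH^s(K)$ and $\HH^s(\kk)$, since without this compatibility Lemma~\ref{lem: abelian} cannot be invoked even with the class bound in hand.
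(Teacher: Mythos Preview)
Your proposal is correct and follows essentially the same route as the paper: use the inductive, $G$-equivariant identification $\HH^s(K)\isom\HH^s(\kk)$, feed it into Lemma~\ref{lem: abelian} for the quotient $Q=G/K\isom\Zp$, and then observe that both spectral sequences are concentrated in the columns $r=0,1$ and hence collapse at $E_2$. The one place where the paper is more explicit is the unipotence check you flagged: it uses the saturability consequence $\gamma_p(G)\leq G^p$ to conclude that $(C_\sigma-\id)^i$ lands in $p\kk$ for $i\geq p$, so the conjugation action on $\kk/p\kk$ (and hence on $\Hom(\Lambda^s\kk,\Fp)$ and on $\HH^s(\kk;\Fp)$) is unipotent of class at most $p$ and $T=\log\sigma$ agrees with the truncated logarithm there.
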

	
	\begin{proof}
	
	Assume by induction that, for all $n\geq 0$, there is a $G$-equivariant isomorphism $$\HH^n(K;\Fp)\isom\HH^n(\kk;\Fp).$$
	Then, we have that 
	\begin{displaymath}
		E_2^{r,s}(G)=\HH^r\big(G/K;\HH^s(K,\Fp)\big)\isom \begin{dcases}
			\HH^s(K;\Fp)^{G/K} & \text{if } r=0, \\
			\frac{\HH^s(K;\Fp)}{(\sigma-1)\HH^s(K;\Fp)} & \text{if } r=1, \\
			0 & \text{if } r\geq 2, \\
		\end{dcases}
	\end{displaymath}
	and, similarly,
	\begin{displaymath}
		E_2^{r,s}(\g)=\HH^r\big(\g/\kk,\HH^s(\kk,\Fp)\big)\isom \begin{dcases}
			\HH^s(\kk,\Fp)^{\g/\kk} & \text{if } r=0, \\
			\frac{\HH^s(\kk,\Fp)}{T\HH^s(\kk,\Fp)} & \text{if } r=1, \\
			0 & \text{if } r\geq 2. \\
		\end{dcases}
	\end{displaymath}
To show that for all $r,s\geq 0$, Lazard's correspondence gives an isomorphism of $\F_p$-vector spaces $E^{r,s}_2(G)\cong E^{r,s}_2(\g)$, we set $V=\HH^s(K,\Fp)\isom \HH^s(\kk,\Fp)$ and verify that the conditions of Lemma \ref{lem: abelian} are satisfied. Indeed, $G$ acts on $\kk$ and thus, on $\kk/p\kk$, by the $\exp$ of the adjoint. Moreover, since $G$ is saturable, we have that $\gamma_p(G)\leq G^p$ and  thus, the action of $G$ on $\kk/p\kk$ is unipotent of class at most $p$. More explicitly, the generator $T=\log \sigma\in \q=\g/\kk$ acts on $x\in \kk$ via
	\begin{displaymath}
		x\cdot T=\sum_{i=1}^{p-1}\frac{(-1)^{i+1}}{i}x(C_\sigma-\id_Q)^i =x\cdot\log(C_{\sigma}),
	\end{displaymath}
	where $xC_\sigma=x^\sigma$ is conjugation by $\sigma\in Q$. Hence, $x(C_\sigma-\id)^i\in p\kk$ for all $x\in \kk$ and $i\geq p$. Hence, the generator $T\in\q$ acts on the $\Fp$-module $\Hom(\Lambda^s \kk,\Fp)$, and thus on $\HH^s(\kk,\Fp)$, as $\log \sigma$. In conclusion, the action of $\q$ on $\HH^*(\kk;\F_p)$ is unipotent of class at most $p$ and therefore, we can deduce from Lemma \ref{lem: abelian}, that
	
	\begin{displaymath}
		\HH^r\big(Q;\HH^s(K)\big) \isom \HH^r\big(\q;\HH^s(\kk)\big)
	\end{displaymath}
	as $G/K$-modules. Finally, observe that all the differentials of both spectral sequences are trivial, and hence, $E_{\infty}=E_2$ holds in both cases.
	\end{proof}

	As we already mentioned, $E_2\cong E_{\infty}$ holds for both spectral sequences and we obtain the following result.
	
	\begin{corollary}
		For any $n\geq 0$, we have an isomorphism
		\begin{displaymath}
			\HH^n(G)\isom \HH^n(\g)
		\end{displaymath}
		as $\Fp$-modules.
	\end{corollary}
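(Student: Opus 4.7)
The plan is to deduce the corollary from the preceding Theorem by induction on $\dim G$, using the convergence of the Lyndon--Hochschild--Serre spectral sequences to pass from $E_\infty$ to $\HH^*$.

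For the base case, $\dim G = 1$ forces $G$ abelian and torsion-free, so $G \cong \Z_p$ and $\g = \log G \cong \Z_p$; Examples \ref{ex: GroupDimOne} and \ref{ex: LieDimOne} then give the desired isomorphism directly. For the inductive step, I would apply Proposition \ref{prop: subgroup series} to obtain a saturable PF-embedded normal subgroup $K \normaleq G$ with $G/K \cong \Z_p$ and $\dim K = \dim G - 1$; its associated Lie algebra $\kk = \log K$ is PF-embedded in $\g$ with $\g/\kk \cong \Z_p$. The inductive hypothesis applied to the solvable saturable group $K$, strengthened to yield a $G$-equivariant isomorphism $\HH^n(K;\F_p) \cong \HH^n(\kk;\F_p)$ for every $n$, matches exactly the hypothesis of the preceding Theorem, which in turn delivers $E^{r,s}_\infty(G) \cong E^{r,s}_\infty(\g)$ as $\F_p$-vector spaces for all $r,s \geq 0$.

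To pass from $E_\infty$ to $\HH^*$, note that both LHS spectral sequences converge to $\HH^{r+s}(G;\F_p)$ and $\HH^{r+s}(\g;\F_p)$, respectively. Since the quotient is $1$-dimensional, the $E_\infty$ page is concentrated in columns $r \in \{0,1\}$, so each $\HH^n$ carries a two-step filtration with associated graded $\bigoplus_{r+s=n} E^{r,s}_\infty$. As $\F_p$ is a field, this filtration splits at the level of vector spaces, so
\[
\HH^n(G;\F_p) \cong \bigoplus_{r+s=n} E^{r,s}_\infty(G) \cong \bigoplus_{r+s=n} E^{r,s}_\infty(\g) \cong \HH^n(\g;\F_p),
\]
which closes the induction and gives the corollary.

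The main obstacle, and really the only delicate point, is keeping the inductive hypothesis strong enough at each step: the preceding Theorem needs a $G$-equivariant isomorphism $\HH^n(K) \cong \HH^n(\kk)$, not merely an abstract $\F_p$-vector space identification. I would therefore need to verify that the isomorphisms arising from the iterated use of Lemma \ref{lem: abelian} are natural with respect to the outer conjugation action of $G$ on $K$ and the corresponding adjoint action of $\g$ on $\kk$ under Lazard's correspondence, so that equivariance is preserved along the whole tower of Proposition \ref{prop: subgroup series}.
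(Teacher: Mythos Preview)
Your proposal is correct and follows essentially the same approach as the paper: induction on $\dim G$ with the base case handled by Examples~\ref{ex: GroupDimOne} and~\ref{ex: LieDimOne}, the inductive step via Proposition~\ref{prop: subgroup series} and the preceding Theorem, and the passage from $E_\infty$ to $\HH^*$ by noting the spectral sequences are concentrated in two columns so that the filtration splits over $\Fp$. Your explicit flagging of the equivariance issue is in fact more careful than the paper, which simply assumes the inductive isomorphism $\HH^n(K)\cong\HH^n(\kk)$ is $G$-equivariant without separately justifying naturality under the outer action.
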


\section{Examples and further work}

Let $p>3$ be a prime number. Let $G$ be a torsion-free solvable pro-$p$ group such that $G/G^p$ is a generalized extraespecial finite $p$-group. Then, $G$ is saturable and its associated $\Z_p$-Lie algebra $\g$ satisfies that $\g/p\g$ is a generalized Heisenberg Lie algebra over $\F_p$. Since the Betti numbers of $\g/p\g$ have been computed in \cite{CaJa08}, applying our result, the Betti numbers of $G$ are known. For example, the following pro-$p$ groups fall into the aforementioned family: for $n\in \N$,
\begin{align*}
G_n=&\langle x_1, \dots, x_n, y_1, \dots, y_n, z \; \mid \; [x_i, x_j]=[y_i, y_j]=[x_i, y_j]=[x_i, z]= [y_j,z]= 1,\\
& [x_i,y_i]=z, \, \text{for all}\, i\neq j\in \{1, \dots,m\}\rangle.
\end{align*}

It would be interesting to provide further explicit examples of this flavor. Furthermore, obtaining concrete computations of the cohomology of saturable groups and of their associated Lie algebras would be highly desirable, as these could serve as meaningful tests for our conjecture.

A different direction would be to compare the cohomology of a saturable group over  $\F_p$ and over $\Q_p$. While the structures of the respective cohomology algebras can differ significantly, one might expect that, under suitable conditions, both cohomology algebras could be realized by the same representable functor.

\begin{Conjecture}
	Let $G$ be a saturable nilpotent pro-$p$ group with associated $\Zp$-Lie algebra $\g$.  Assume that the number of generators of $G$ as a pro-$p$ group 
	and of $\g\otimes_{\Z_p} \Q_p$ as a $\Q_p$-Lie algebra coincide. Then there exists a representable functor of commutative graded $\Z_p$-algebras $A$ such that
	\begin{displaymath}
		\HH^{*}(G;\Fp)\cong A(\F_p) \text{\  and\ } \HH^{*}(G;\Qp)\cong A(\Q_p) .
	\end{displaymath}
\end{Conjecture}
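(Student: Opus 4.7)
The plan is to define $A$ to be the graded-commutative $\Z_p$-algebra $\HH^*(\g;\Z_p)$, computed via the Chevalley-Eilenberg resolution of Section~\ref{sec: cohomology}, and to let the representable functor be the base change $R\mapsto A\otimes_{\Z_p}R$ (with its base-changed cup product), defined on commutative $\Z_p$-algebras. The conjecture then splits into checking the $R=\F_p$ and $R=\Q_p$ fibres, with each verification requiring control of Tor-obstructions.

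For the $\Q_p$ side, I would invoke Lazard's theorem for compact $p$-adic analytic groups, which yields an isomorphism of graded $\Q_p$-algebras $\HH^*(G;\Q_p)\cong \HH^*(\g\otimes_{\Z_p}\Q_p;\Q_p)^G$. Flatness of $\Q_p$ over $\Z_p$ gives $A\otimes_{\Z_p}\Q_p\cong \HH^*(\g\otimes\Q_p;\Q_p)$ with no Tor obstruction. It remains to check that the $G$-action on this cohomology is trivial. Since $G$ is saturable, every $g\in G$ equals $\exp(x)$ for some $x\in\g$, so the adjoint action on $\g\otimes\Q_p$ factors through $\exp(\ad x)$. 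Cartan's formula $L_x=\dd\iota_x+\iota_x\dd$ makes $\ad x$ chain-homotopic to zero on the Chevalley-Eilenberg cochain complex with trivial coefficients, so $\exp(\ad x)$ induces the identity on cohomology; hence the $G$-invariants are the whole algebra.

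For the $\F_p$ side, since nilpotent groups are solvable, Theorem~\ref{thm: main} supplies $\HH^*(G;\F_p)\cong \HH^*(\g;\F_p)$ as $\F_p$-vector spaces. Applying the universal coefficient theorem to the bounded Chevalley-Eilenberg cochain complex over $\Z_p$, the canonical comparison map $A\otimes_{\Z_p}\F_p\to \HH^*(\g;\F_p)$ is an isomorphism precisely when every $\HH^n(\g;\Z_p)$ is a torsion-free $\Z_p$-module. The conjecture therefore reduces to establishing this torsion-freeness, together with promoting the vector-space isomorphism of Theorem~\ref{thm: main} to a graded-algebra isomorphism (this latter upgrade is itself part of the open conjecture in the introduction).

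The main obstacle is the torsion-freeness of $\HH^*(\g;\Z_p)$, and this is where the generator hypothesis enters. The identities $\dim_{\F_p}\HH^1(G;\F_p)=d(G)$ on the group side and $\dim_{\Q_p}\HH^1(\g\otimes\Q_p;\Q_p)=\mathrm{rank}_{\Z_p}\HH^1(\g;\Z_p)$ on the Lie side, combined with Theorem~\ref{thm: main} and the universal coefficient sequence, force $\HH^1(\g;\Z_p)$ and $\HH^2(\g;\Z_p)$ to be $p$-torsion free. To propagate to all degrees, I would induct on the nilpotency class of $\g$ via the LHS spectral sequence for a central extension $0\to\mathfrak{z}\to\g\to\g/\mathfrak{z}\to 0$ with $\mathfrak{z}\cong\Z_p$ a rank-one central subalgebra, so that the inductive hypothesis gives torsion-free $E_2$-terms. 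The delicate point, and the heart of the difficulty, is that even with a torsion-free $E_2$ page over $\Z_p$, cokernels of differentials $d_r$ can introduce $p$-torsion into $E_\infty$ and non-split filtration extensions can do likewise; ruling this out seems to require a Bockstein-type comparison among the $\Z_p$-, $\F_p$-, and $\Q_p$-spectral sequences, and finding a clean argument here is the reason the conjecture remains open.
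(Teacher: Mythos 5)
The statement you are addressing is posed in the paper as an open conjecture: the authors give no proof, only a verification for the Heisenberg group over $\Z_p$, so there is no argument of theirs to measure yours against. That said, your setup is sensible and consistent with their example. Taking $A=\HH^*(\g;\Z_p)$ with the functor $R\mapsto A\otimes_{\Z_p}R$ matches the Heisenberg case, where the presented algebra $\Lambda(x,y,X,Y,Z)$ modulo the stated relations is exactly the integral Chevalley--Eilenberg cohomology of the Heisenberg Lie algebra. Your $\Q_p$-branch is essentially complete: Lazard gives $\HH^*(G;\Q_p)\cong\HH^*(\g\otimes_{\Z_p}\Q_p;\Q_p)^G$, and since every element of a saturable group is $\exp(x)$ for some $x\in\g$ and $L_x=\dd\iota_x+\iota_x\dd$ is null-homotopic on the Chevalley--Eilenberg complex with trivial coefficients, $\mathrm{Ad}(g)=\exp(\ad x)$ induces the identity on cohomology and the invariants are the whole algebra.

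The $\F_p$-branch, however, is a reduction rather than a proof, and you say so yourself; two genuine gaps remain. First, the comparison $A\otimes_{\Z_p}\F_p\cong\HH^*(\g;\F_p)$ requires every $\HH^n(\g;\Z_p)$ to be $p$-torsion-free. The generator hypothesis, fed through the universal coefficient sequence, controls only low degrees (it forces $\g/[\g,\g]$ to be $\Z_p$-free and hence $\HH^2(\g;\Z_p)$ to be torsion-free), and your proposed induction along a central series does not close: as you note, differentials and filtration extensions in the integral spectral sequence can create torsion on $E_\infty$ even when the $E_2$-page is torsion-free, and no mechanism in the paper rules this out. Second, even granting torsion-freeness, the paper's main theorem --- the only bridge between $\HH^*(G;\F_p)$ and $\HH^*(\g;\F_p)$ that is actually proved --- is an isomorphism of graded $\F_p$-vector spaces only, whereas the conjecture (as the Heisenberg example with its explicit multiplicative relations makes clear) concerns graded algebras; upgrading to a ring isomorphism is precisely the first conjecture of the introduction and is itself open. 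So your proposal correctly isolates where the difficulty lies and is a reasonable strategy, but neither obstacle is overcome and the statement remains unproven.
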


Observe that this conjecture holds for the Heissenberg group over $\Z_p$. Indeed, let $G$ denote the Heisenberg group over $\Z_p$ then both cohomology algebras $\HH^*(G)$ and $\HH^*(G;\Q_p)$ are represented by the exterior algebra $\Lambda(x,y,X,Y,Z)$ on five generators of degrees $ |x|=|y|=1$, $|X|=|Y|=2, |Z|=3$ and satisfying the following relations
\[
xy=xX=yY=XY=0, xY=yX=Z.
\]

Based on the previous example together with the technics used through the proofs it is natural to extend the previous conjecture to the following one:

\begin{Conjecture}
	Let $G$  be a nilpotent affine group scheme over the integers. Then there exist a representable functor of commutative graded $\Z$-algebras $A$ such that for almost all primes $p$
	\begin{displaymath}
		\HH^{*}(G(\Z_p);\Fp)\cong A(\F_p) \text{\  and\ } \HH^{*}(G(\Z_p);\Qp)\cong A(\Q_p) .
	\end{displaymath}
\end{Conjecture}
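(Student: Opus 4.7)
The plan is to take $\mathcal{A}=\HH^{*}(\g_{\Z};\Z)$, where $\g_{\Z}=\mathrm{Lie}(G)$ is the integral Lie algebra of $G$, and to let $A$ be its base-change functor $A(R)=\mathcal{A}\otimes_{\Z} R$. This is representable in the sense illustrated by the Heisenberg example above: a fixed graded-commutative $\Z$-algebra extended by scalars.

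Since $G$ is a nilpotent affine group scheme of finite type over $\Z$, which I will take to be smooth, $\g_{\Z}$ is a nilpotent $\Z$-Lie algebra, free of finite rank as a $\Z$-module. First I would form the integral Chevalley-Eilenberg complex $C^{*}(\g_{\Z};\Z)=\Hom_{\Z}(\Lambda^{*}_{\Z}\g_{\Z},\Z)$, a bounded complex of finitely generated free $\Z$-modules endowed with its standard graded-commutative cup product, and set $\mathcal{A}=\HH^{*}(\g_{\Z};\Z)$. Because the complex is term-wise free and hence commutes with base change, and $\mathcal{A}$ is a finitely generated $\Z$-module whose torsion is supported at only finitely many primes, universal coefficients gives
\[
A(R)=\mathcal{A}\otimes_{\Z} R\cong \HH^{*}(\g_{\Z}\otimes_{\Z} R;R)\qquad \text{for } R=\F_{p} \text{ or } \Q_{p},
\]
whenever $p$ lies outside this finite set of bad primes.

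Next, excluding also the primes $p$ that do not exceed the nilpotency class of $\g_{\Z}$, the group $G(\Z_{p})$ becomes a torsion-free nilpotent pro-$p$ group of class strictly less than $p$, hence saturable, with associated $\Z_{p}$-Lie algebra $\g_{\Z}\otimes\Z_{p}$. For the mod-$p$ side, I would invoke the nilpotent case of the first Conjecture of the Introduction, combined with the remark following Lemma \ref{lem: eckmann-shapiro}, to conclude
\[
\HH^{*}(G(\Z_{p});\F_{p})\cong \HH^{*}(\g_{\Z}\otimes\Z_{p};\F_{p})\cong \HH^{*}(\g_{\Z}\otimes\F_{p};\F_{p})\cong A(\F_{p}).
\]
For $\Q_{p}$ coefficients, Lazard's theorem yields $\HH^{*}(G(\Z_{p});\Q_{p})\cong \HH^{*}(\g_{\Z}\otimes\Q_{p};\Q_{p})^{G(\Z_{p})}$; but the $G(\Z_{p})$-action factors through the adjoint action $\mathrm{Ad}(\exp(x))=\exp(\ad(x))$, and inner derivations act trivially on Lie algebra cohomology via the classical null-homotopy $L_{x}=\dd\iota_{x}+\iota_{x}\dd$ on the Chevalley-Eilenberg complex. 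Hence the invariants equal the whole cohomology, and the right-hand side is $A(\Q_{p})$.

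The main obstacle is the mod-$p$ comparison $\HH^{*}(G(\Z_{p});\F_{p})\cong \HH^{*}(\g_{\Z}\otimes\Z_{p};\F_{p})$, which is precisely the still-open nilpotent subcase of the first Conjecture of the Introduction. The strategy of Section \ref{sec: solvable}, based on a subnormal $\Z_{p}$-quotient series and Lemma \ref{lem: abelian}, should adapt to the nilpotent setting since nilpotent saturable pro-$p$ groups admit particularly clean central series whose quotients are isomorphic to $\Z_{p}$; making this rigorous via iterated LHS spectral sequences is the essential technical task. The remaining ingredients, namely the universal-coefficients step and the triviality of the adjoint action on rational Lie algebra cohomology, are standard under the class-less-than-$p$ hypothesis that is automatic after excluding finitely many primes.
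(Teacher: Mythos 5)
This statement is one of the paper's closing \emph{conjectures}: the authors give no proof of it (they only verify the preceding conjecture for the Heisenberg group), so there is nothing in the paper to match your argument against, and what you have written must stand on its own as a proof. It does not. Your outline is a sensible reduction --- take $\mathcal{A}=\HH^*(\g_\Z;\Z)$ computed from the integral Chevalley--Eilenberg complex, use universal coefficients to identify $\mathcal{A}\otimes_\Z R$ with $\HH^*(\g_\Z\otimes R;R)$ away from the torsion primes, handle $\Q_p$ via Lazard's stable-elements theorem together with the null-homotopy $L_x=\dd\iota_x+\iota_x\dd$ killing the adjoint action, and handle $\F_p$ via the first Conjecture of the Introduction --- but you concede that the last step is exactly that open conjecture, so the whole argument is conditional. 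A conditional reduction of one conjecture to another is not a proof.

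Moreover, you misidentify where the remaining difficulty lies. You say the obstacle is the ``nilpotent subcase'' and that the strategy of Section \ref{sec: solvable} ``should adapt to the nilpotent setting.'' But nilpotent groups are solvable, so the paper's Theorem already gives $\HH^n(G(\Z_p);\F_p)\cong\HH^n(\g;\F_p)$ for every $n$ in your situation --- \emph{as $\F_p$-vector spaces}. What the conjecture needs, since $A$ is a functor of commutative graded algebras, is an isomorphism of graded algebras, and that is precisely what the LHS spectral-sequence comparison cannot deliver: identifying the $E_2$ (hence $E_\infty$) pages termwise says nothing about cup products on the abutment, and no multiplicative compatibility is established anywhere in Section \ref{sec: solvable}. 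So iterating that argument, as you propose, cannot close the gap; a genuinely new multiplicative input is required. There are also smaller unaddressed points: you silently assume $G$ is smooth of finite type with $G(\Z_p)$ a torsion-free pro-$p$ group (false already for $\mathbb{G}_m$, which is a nilpotent affine group scheme), and the $G(\Z_p)$-action need not be inner unless $G$ is connected/unipotent so that $\mathrm{Ad}$ is genuinely $\exp(\ad)$ of elements of $\g$. These can likely be repaired by restricting to unipotent group schemes and discarding finitely many primes, but the multiplicative mod-$p$ comparison is a real missing idea, not a technical task.
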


\bibliographystyle{plain}

\bibliography{bib}

\end{document}